\documentclass[12pt]{amsart}
\usepackage{amssymb}
\textwidth 160mm
\oddsidemargin 0mm
\evensidemargin 0mm
\textheight 207mm

\title[An entropic proof of cutoff]{An entropic proof of cutoff on Ramanujan graphs}
\author[N. Ozawa]{Narutaka Ozawa}
\address{RIMS, Kyoto University, \mbox{606-8502} Japan}
\email{narutaka@kurims.kyoto-u.ac.jp}
\subjclass{05C81, 60J10, 94A17}

\keywords{Random walk, cutoff, entropy, Ramanujan graph}
\date{\today}
 
\newtheorem{thm}{Theorem}

\newtheorem{cor}[thm]{Corollary}
\newtheorem{lem}[thm]{Lemma}
\theoremstyle{remark}
\newtheorem{remark}[thm]{Remark}

\newcommand{\IB}{\mathbb B}
\newcommand{\IC}{\mathbb C}
\newcommand{\IE}[1]{\mathop{\mathbb E}\bigl[#1\bigr]}

\newcommand{\IN}{\mathbb N}

\newcommand{\IP}{\mathbb P}
\newcommand{\cA}{\mathcal A}
\newcommand{\cB}{\mathcal B}
\newcommand{\cN}{\mathcal N}

\newcommand{\cG}{\mathcal G}

\newcommand{\ve}{\varepsilon}

\newcommand{\mix}{\mathrm{mix}}
\newcommand{\tv}{\mathrm{TV}}

\DeclareMathOperator{\Prob}{Prob}
\DeclareMathOperator{\supp}{supp}
\newcommand{\ip}[1]{\mathopen{\langle}#1\mathclose{\rangle}}
\begin{document}
\begin{abstract}
It is recently proved by Lubetzky and Peres that the simple random 
walk on a Ramanujan graph exhibits a cutoff phenomenon, that is to say, 
the total variation distance of the random walk distribution from 
the uniform distribution drops abruptly from near $1$ to near $0$. 
There are already a few alternative proofs of this fact. 
In this note, we give yet another proof based on 
functional analysis and entropic consideration. 
\end{abstract}
\maketitle

\section{Introduction}
Throughout this note, we fix $d\geq 3$ and consider 
finite simple connected $d$-regular non-bipartite graphs $G$.
We identify $G$ with its vertex set and denote by $P \in \IB(\ell_2G)$ 
the normalized adjacency matrix on $G$, 
which is given by $P(x,y)=\frac{1}{d}$ if $x$ and $y$ are 
adjacent and else $0$. 
The hermitian matrix $P$ leaves the constant vectors invariant 
and the corresponding eigenvalue $1$ has multiplicity $1$. 
Thus the eigenvalues of $P$ are 
$1=\lambda_1>\lambda_2\geq\cdots\geq\lambda_{|G|}>-1$. 
We are interested in the \emph{reduced spectral radius}
\begin{equation}
\rho:=\| P|_{\ell_2G\cap(\IC 1)^\perp} \|_{\IB(\ell_2G\cap(\IC 1)^\perp)}
 = \max\{\lambda_2,-\lambda_{|G|}\}<1.
\end{equation}
We consider the simple random walk $(X^t)_{t\in\IN}$ on $G$ starting 
at some $X^0\in G$. 
The probability distribution $\mu^t:=P^t(\,\cdot\,,X^0)$ of $X^t$ 
converges to the uniform distribution $\pi:=\frac{1}{|G|}1_{G}$.
For $\alpha\in(0,1)$, the \emph{total variation mixing time} is 
defined to be 
\begin{equation}
T^{\mix}(\alpha):=\min\{ t \in \IN : \| \mu^t - \pi \|_{\tv} < \alpha\},
\end{equation}
where 
\begin{equation}
\| \nu-\eta \|_{\tv}=\max_{A\subset G}\{|\nu(A) - \eta(A)|\}
 =\frac{1}{2}\sum_{x\in G} |\nu(x)-\eta(x)|=\frac{1}{2}\|\nu-\eta\|_1
\end{equation}
is the total variation norm. 
The total variation mixing time $T^{\mix}(\alpha)$ is clearly monotone in $\alpha$.
Since the random walk $(X^t)_t$ backtracks with probability $1/d$, 
the expected distance from the origin $X^0$ to $X^t$ is at most 
$\frac{d-2}{d}t$. 
Thus $\mu^t$ is concentrated on the ball of radius 
$\frac{d-2}{d}t+O(\sqrt{t})$, which has cardinality 
at most $d(d-1)^{\frac{d-2}{d}t+O(\sqrt{t})}$.  
This observation leads to the \emph{entropic lower bound} 
of the total variation mixing time,
\begin{equation}\label{eq:elb}
T^{\mix}(\alpha) \geq \frac{\log|G|}{h_d}-o_{\alpha}(\log|G|), 
\end{equation}
where 
\begin{equation}
h_d:=\frac{(d-2)\log(d-1)}{d}
\end{equation}
is the asymptotic entropy of the simple random walk on the $d$-regular tree 
(see \cite{lp,bl} for a more precise estimate). 
On the other hand, 
since $\|\mu^t - \pi \|_1\le |G|^{1/2}\|\mu^t - \pi \|_2\le|G|^{1/2}\rho^t$, 
one has the \emph{spectral upper bound}
\begin{equation}\label{eq:sub}
T^{\mix}(\alpha) \le \frac{\log|G|+O_{\alpha}(1)}{-2\log\rho}\le \frac{\log|G|+O_{\alpha}(1)}{2(1-\rho)}.
\end{equation}
This upper bound also allows an entropic interpretation, see Lemma~\ref{entinc} below.

Now, let $\cG=\{ G \}$ be a family of graphs. 
Recall that $\cG$ is said to be a \emph{family of expanders} (see e.g., \cite{lubotzky}) if 
\begin{equation}
\sup\{ \rho_G : G\in\cG\} < 1.
\end{equation}
When needed, we add the subscript $G$ to the symbol $\rho$ etc 
to specify the graph $G$ under consideration.
Note that the \emph{Alon--Boppana bound} gives the lower estimate 
\begin{equation}
\liminf_{G\in\cG,\,|G|\to\infty} \rho_G \geq \frac{2\sqrt{d-1}}{d}=:\rho_d
\end{equation}
for any family $\cG$. 
Here the value $\rho_d$ is the spectral radius of the simple random 
walk on the $d$-regular tree. 
The family $\cG$ is said to be \emph{asymptotically Ramanujan} 
if it marks the Alon--Boppana bound,  
\begin{equation}
\lim_{G\in\cG,\,|G|\to\infty}\rho_G=\rho_d.
\end{equation}
An explicit family of Ramanujan graphs was first discovered by 
Lubotzky--Phillips--Sarnak (\cite{lps}) and it was shown by 
Friedman (\cite{friedman}) that the random graphs are asymptotically Ramanujan.
We have seen in the above that every expander family satisfies 
\begin{equation}\label{eq:expasym}
T^{\mix}_{G}(\alpha)\asymp_\alpha \log |G|
\end{equation}
as $|G|\to\infty$.
The family $\cG$ is said to \emph{exhibit cutoff} 
if the implied constants in the above asymptotic equality 
are independent of $\alpha$, or more precisely, if 
\begin{equation}
\lim_{G\in\cG,\,|G|\to\infty} \frac{T^{\mix}_{G}(\alpha)}{T^{\mix}_G(\alpha')}=1
\end{equation}
for every $\alpha,\alpha'\in(0,1)$ and every choice of the starting points $X_G^0 \in G$.
It is proved by Lubetzky and Peres (\cite{lp}) that asymptotically Ramanujan graphs exhibit cutoff. 
\begin{thm}\label{ramanujancutoffthm}
A family $\cG$ of asymptotically Ramanujan graphs exhibits cutoff. 
In fact, the total variation mixing time marks the entropic lower bound: 
For any $\alpha\in(0,1)$,  
\begin{equation}
\lim_{G\in\cG,\,|G|\to\infty}\frac{T^{\mix}_G(\alpha)}{\log |G|}=\frac{d}{(d-2)\log(d-1)}
=\frac{1}{h_d}.
\end{equation}
\end{thm}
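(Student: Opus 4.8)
The lower bound in the theorem is exactly \eqref{eq:elb}, so the whole task is the matching upper bound: for every fixed $\eta>0$, uniformly over the starting point, one should have $\|\mu^t-\pi\|_{\tv}\to 0$ along the family at $t=\lceil(1+\eta)\log|G|/h_d\rceil$. I would run this through the relative entropy $D(\mu^t\,\|\,\pi):=\sum_x\mu^t(x)\log(|G|\mu^t(x))=\log|G|-H(\mu^t)$, where $H(\nu):=-\sum_x\nu(x)\log\nu(x)$, because by Pinsker's inequality $\|\mu^t-\pi\|_{\tv}^2\le\frac12 D(\mu^t\,\|\,\pi)$ it suffices to show $D(\mu^t\,\|\,\pi)\to 0$. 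Two elementary properties of $D$ enter: data processing (since $P\pi=\pi$), $D(\mu^{t+1}\,\|\,\pi)\le D(\mu^t\,\|\,\pi)$; and the fact that the one-step decrease is an entropy increment,
\begin{equation}\label{eq:incr}
D(\mu^t\,\|\,\pi)-D(\mu^{t+1}\,\|\,\pi)=H(\mu^{t+1})-H(\mu^t)=\log d-\mathbb E_{y\sim\mu^{t+1}}\bigl[H(\nu_y)\bigr]=\mathbb E_{y\sim\mu^{t+1}}\bigl[D(\nu_y\,\|\,u_y)\bigr],
\end{equation}
where $\nu_y(x):=\mu^t(x)/(d\mu^{t+1}(y))$ is the law of $X^t$ conditioned on $X^{t+1}=y$ (a probability vector on the $d$ neighbours of $y$) and $u_y$ is uniform on those neighbours. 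On the $d$-regular tree, for a $\mu^{t+1}$-typical $y$ — graph-distance within $O(\sqrt t)$ of $\frac{d-2}{d}t$ from the root, where all but $o(1)$ of the mass sits by \eqref{eq:elb}-type concentration — the vector $\nu_y$ is, up to a $1+o(1)$ perturbation, one atom of mass $\frac{d-1}{d}$ (towards the root) together with $d-1$ atoms of mass $\frac1{d(d-1)}$, so $D(\nu_y\,\|\,u_y)=h_d+o(1)$: the asymptotic entropy $h_d$ is exactly the generic entropy increment of the walk.

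The crux is an \emph{entropy increment lemma}: for every $\ve>0$ there are $\theta=\theta(\ve)>0$ and $n_0$ such that, for every $G$ in the family with $\rho_G<\rho_d+\theta$ and $|G|\ge n_0$, every starting point, and every $t$ with $D(\mu^t\,\|\,\pi)\ge\ve$, one has $H(\mu^{t+1})-H(\mu^t)\ge h_d-\ve$. (In words: as long as the walk has not yet mixed, its entropy still grows at essentially the tree rate.) Granting this, the theorem follows at once. Suppose $D(\mu^T\,\|\,\pi)\ge\ve$ for $T=\lceil(1+\eta)\log|G|/h_d\rceil$ and infinitely many $G$; by monotonicity $D(\mu^t\,\|\,\pi)\ge\ve$ for all $t\le T$, so for such $G$ with $\rho_G$ close to $\rho_d$ and $|G|$ large the lemma gives $H(\mu^{t+1})-H(\mu^t)\ge h_d-\ve$ for all $t<T$, hence $H(\mu^T)\ge T(h_d-\ve)\ge(1+\eta)(1-\ve/h_d)\log|G|$, which for $\ve$ small compared with $\eta$ exceeds $\log|G|\ge H(\mu^T)$ — a contradiction. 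So $D(\mu^T\,\|\,\pi)\to 0$, whence $\|\mu^T-\pi\|_{\tv}\to0$ by Pinsker and $T^{\mix}_G(\alpha)\le(1+\eta)\log|G|/h_d+O(1)$ for large $|G|$; letting $\eta\downarrow 0$ and combining with \eqref{eq:elb} yields $T^{\mix}_G(\alpha)/\log|G|\to 1/h_d$, a limit independent of $\alpha$, which is cutoff.

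The main obstacle is the increment lemma, and this is where the Ramanujan hypothesis (not mere expansion) and functional-analytic input must be used. By reversibility, $\nu_y$ is the law of the first step of the walk started at $y$ and conditioned to sit at $X^0$ after the remaining $t$ steps; equivalently $\nu_y$ is proportional to $\mu^t$ restricted to $N(y)$, so the lemma is about the ratios $\mu^t(x)/\mu^t(x')$ for neighbours $x,x'$ of a $\mu^{t+1}$-typical $y$, and asks that for all but an $o(1)$-fraction of the mass these match the tree values $(d-1)^{\pm2}$. Asymptotic Ramanujan-ness yields Benjamini--Schramm convergence to the $d$-regular tree, hence a vanishing density of short cycles, so a typical $y$ has a tree-like neighbourhood of any fixed radius; the hypothesis $D(\mu^t\,\|\,\pi)\ge\ve$ excludes the competing degenerate regime in which $\mu^t$ is already almost flat and $D(\nu_y\,\|\,u_y)$ near $0$; and the concentration behind \eqref{eq:elb} puts the bulk of the $\mu^{t+1}$-mass at graph-distance $\approx\frac{d-2}{d}t$ from $X^0$, away from the root regime where $\nu_y$ degenerates the other way. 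The delicate point — the one I expect genuinely to need $\rho_G\to\rho_d$ rather than only $\sup_G\rho_G<1$ — is to show that local tree-likeness together with the sharp spectral bound forces $\mu^t$, on a typical unit ball, to have, up to $1+o(1)$, the tree density profile, so that the vertices near short cycles or where $\mu^t$ is locally atypical contribute only $o(1)$ of the mass in \eqref{eq:incr}. Should this pointwise analysis resist, the fall-back is the non-backtracking comparison of Lubetzky--Peres \cite{lp}: on a Ramanujan graph the non-backtracking walk on directed edges has non-trivial spectrum of modulus $\sqrt{d-1}$, hence mixes in $(1+o(1))\log_{d-1}|G|$ steps via the edge-space $L^2$ estimate, and coupling $t$ simple-walk steps with their $\approx\frac{d-2}{d}t$ non-backtracking steps turns this into $(1+o(1))\log|G|/h_d$ — but it is the route through \eqref{eq:incr} that matches the title.
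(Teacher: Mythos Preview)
Your reduction is exactly the paper's: write $H(t)-H(t-1)=\IE{-\log f_t}$ with $f_t=\mu^t(X^t)/\mu^{t-1}(X^{t-1})$ and show this increment stays $\ge h_d-o(1)$ for all $t$ up to (a version of) the mixing time; the deduction of cutoff from that is the same contradiction you wrote (the paper packages it as Theorem~\ref{cond}). The gap is the proof of the increment lemma itself. Your sketch relies on two ingredients that do not deliver: first, asymptotic Ramanujan does \emph{not} imply Benjamini--Schramm convergence to $T_d$---the hypothesis constrains only the edge of the spectrum, not the bulk, and the Lubetzky--Peres theorem is notable precisely because it requires no local tree-likeness; second, even granting locally tree-like neighbourhoods, turning that into the pointwise statement ``$\mu^t$ has the tree density profile on a typical unit ball'' is exactly the hard analytic step you would still have to supply, and you concede as much by offering the non-backtracking argument of \cite{lp} as a fallback. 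So as written the proposal reduces the theorem to a lemma it does not prove, via a route that cannot use the Ramanujan hypothesis in the way you suggest.

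What the paper does instead---and this is the missing idea---is observe the $\ell_2$ identity
\[
\IE{f_t^{1/2}}=\ip{P(\mu^{t-1})^{1/2},(\mu^t)^{1/2}},
\]
so that while the walk is unmixed in Hellinger distance one has $\IE{f_t^{1/2}}\le\|P(\mu^{t-1})^{1/2}\|\le\rho_G+\ve$. On the covering tree the analogous $\tilde f_t$ satisfies $\IE{\tilde f_t^{1/2}}\to\rho_d$ (the ``lucky coincidence''), and since $f_t\ge\IE{\tilde f_t\mid X^{t-1},X^t}$ one gets $\IE{f_t^{1/2}}\ge\IE{\tilde f_t^{1/2}}$ by Jensen. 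The Ramanujan hypothesis $\rho_G\to\rho_d$ now pinches $\IE{f_t^{1/2}}$ between $\rho_d\pm\ve$, and a conditional-Jensen rigidity argument (Lemma~\ref{ces}) upgrades this to $f_t\approx\tilde f_t$ in $L^1$, whence $\IE{-\log f_t}\ge h_d-o(1)$. No local structure of $G$ is used; the spectral bound enters once, through $\|P\xi\|$, and the tree enters only as the universal cover controlling the \emph{distribution} of $f_t$---never through short-cycle counts or pointwise heat-kernel estimates. This square-root functional is the device your outline is missing.
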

See \cite{hermon,bl} for alternative proofs of this theorem.
In this note, we give yet another proof of it based on 
functional analysis and entropic consideration. 
Entropic considerations have been also instrumental 
in some of the recent works on cutoff phenomena, 
see \cite{bhlp,bhs,bcs,ck}, to just name a few.
\section{Entropy}

Let $\Prob(G)$ denote the set of probability measures on a  (finite or infinite) graph $G$.
Recall that the \emph{Shannon entropy} (or just \emph{entropy}) of $\nu\in\Prob(G)$ is the quantity 
\begin{equation}
H(\nu)=-\sum_x\nu(x)\log\nu(x).
\end{equation}
When a random walk $(X^t)_{t\in\IN}$ on $G$ is under consideration, 
we also write  
\begin{equation}
H(t) := H(\mu^t) =\IE{-\log\mu^t(X^t)}.
\end{equation}

Let's assume for the moment that the graph $G$ is transitive. 
Then the  quantity $H(t)$ is increasing and concave (Proposition 1.3 in \cite{kv}). 
The limit 
\begin{equation}
h:=\lim_t \frac{1}{t}H(t)=\lim_t H(t)-H(t-1)
\end{equation}
is called the \emph{asymptotic entropy} of the random walk $(X^t)_t$. 
Moreover, the Shannon--McMillan--Breiman type Theorem 
holds that $\frac{1}{t}|\mbox{$-\log\mu^t(X^t)$} - H(t)|$ converges to zero almost surely (Section IV in \cite{derriennic}, Theorem 2.1 in \cite{kv}). 
This result is vacuous when $G$ is finite.

In this note, we are interested in the case of a finite simple connected 
$d$-regular non-bipartite graph $G$ and the quantitative growth property of $H(t)$. 
Since $G$ is connected and non-bipartite, one has $\mu^t\to\pi$ 
and $H(t)\nearrow H(\pi)=\log|G|$.

We review a concavity property of the entropy functional. 
For $\nu\in\Prob(G)$, we put $P\nu:=\sum_x P(\,\cdot\,,x)\nu(x)\in\Prob(G)$. 
Note that $\mu^{t+1}=P\mu^t$. 
A caveat is in order here: the notation $\nu P$ instead of $P\nu$ may be more common 
among probability theory, but we stick to $P\nu$, because we view $\nu$ as a vector 
in $\ell_1$ and deal with $\nu^{1/2}\in\ell_2$. This notation should not 
cause any confusion, particularly because our $P$ is always symmetric. 

\begin{lem}\label{entinc}
For any $\nu\in\Prob(G)$, one has 
\begin{equation}
H(P\nu)-H(\nu) \geq \frac{1-\rho}{16}\|\nu-\pi\|_1^2.
\end{equation}
\end{lem}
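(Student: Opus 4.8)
I want to show that one step of the random walk increases entropy by at least a constant (depending on the spectral gap) times $\|\nu-\pi\|_1^2$. The natural route is to pass from entropy, which is hard to control directly, to a quantity that interacts well with the operator $P$ — namely an $\ell_2$-type quantity applied to $\nu^{1/2}$. The key identity underlying all such arguments is that $H(P\nu) - H(\nu)$ has the form of a Jensen/convexity defect: writing $H(\nu) = -\langle \nu, \log\nu\rangle$ and using $\mu^{t+1}=P\mu^t$, one expresses the entropy increment as a sum over edges of terms like $\nu(x)\log\frac{\nu(x)}{(P\nu)(y)}$ weighted by $P(x,y)$, i.e. a conditional relative entropy, which is nonnegative and which I want to bound below by something quadratic.

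**Key steps.** First I would reduce to an $\ell_2$ statement: the cleanest lower bound for the entropy increment is via the relation between Shannon entropy and the quadratic form of $P$ on the square root $f:=\nu^{1/2}\in\ell_2G$ (with $\|f\|_2=1$). Concretely, $-\log$ is convex, so by Jensen applied to the probability weights $P(y,x)\nu(x)/(P\nu)(y)$ one gets that the entropy increment dominates a Dirichlet-type form; alternatively, use that for the heat semigroup $H(P\nu)-H(\nu)\ge \langle (I-P) f, f\rangle$ up to universal constants after a truncation argument, essentially because $\log$ is slowly varying. Second, I would bound $\langle(I-P)f,f\rangle \ge (1-\rho)\|f - \langle f,\pi^{1/2}\rangle\,\pi^{1/2}\|_2^2 = (1-\rho)(1 - \langle f,\pi^{1/2}\rangle^2)$, using the definition of $\rho$ as the norm of $P$ on $(\IC 1)^\perp$ and that $\pi^{1/2}=|G|^{-1/2}1_G$ is the Perron eigenvector. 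Third, I would convert this $\ell_2$ quantity back to $\|\nu-\pi\|_1$: one has $1-\langle f,\pi^{1/2}\rangle \ge \tfrac12\|f-\pi^{1/2}\|_2^2$ and, by the elementary inequality $\|\nu-\pi\|_1 = \|f^2 - (\pi^{1/2})^2\|_1 \le \|f-\pi^{1/2}\|_2\,\|f+\pi^{1/2}\|_2 \le 2\|f-\pi^{1/2}\|_2$, so $\|f-\pi^{1/2}\|_2^2 \ge \tfrac14\|\nu-\pi\|_1^2$. Chaining these, with the constants tracked, should land at $\frac{1-\rho}{16}\|\nu-\pi\|_1^2$; the factor $16$ presumably comes from the product of a $\tfrac12$ (Jensen/square-root linearization), a $\tfrac12$ (from $1-x^2\ge (1-x)^2$ when combined correctly, or from $1-\langle f,\pi^{1/2}\rangle$ versus $\tfrac12\|f-\pi^{1/2}\|^2$), and a $\tfrac14$ (the $\ell_1$-to-$\ell_2$ square-root step), i.e. $\tfrac{1}{16}$.

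**Main obstacle.** The delicate point is the first step: relating the Shannon entropy increment $H(P\nu)-H(\nu)$ to the Dirichlet form $\langle(I-P)f,f\rangle$ of the square root. Unlike the $\ell_2$ contraction $\|P\nu-\pi\|_2\le\rho\|\nu-\pi\|_2$, which is immediate, the entropy is not a quadratic functional, and the comparison between $-p\log p$ summed against $P$ and $\sum P(x,y)(f(x)-f(y))^2$ is not an equality but an inequality that needs care — one must handle the region where $\nu$ is tiny (where $\log$ blows up) separately, or invoke a known functional inequality (a form of the fact that the "log-Sobolev-type" defect dominates the Dirichlet form, or Pinsker combined with the variational formula for entropy increase). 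I expect to spend most of the work making this comparison rigorous with explicit constants, while steps two and three are routine spectral-gap and Cauchy–Schwarz manipulations.
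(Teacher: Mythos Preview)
Your overall architecture --- entropy increment $\to$ an $\ell_2$ quantity in $f=\nu^{1/2}$ $\to$ spectral gap $\to$ $\ell_1$ distance --- is exactly the paper's, and your steps two and three are essentially correct and match the paper's chain (Pythagoras on $(\IC\pi^{1/2})^\perp$, then $\|\nu-\pi\|_1\le\|f-\pi^{1/2}\|_2\|f+\pi^{1/2}\|_2\le 2\|f-\pi^{1/2}\|_2$). The gap is precisely where you say it is: step one. You have not supplied an actual inequality linking $H(P\nu)-H(\nu)$ to an $\ell_2$ quantity in $f$, and the suggestions you float (Jensen on the backward weights, ``heat semigroup'' heuristics, truncation, log-Sobolev, Pinsker) do not obviously produce a clean lower bound with a universal constant; the generic Jensen argument only gives nonnegativity, and comparing the $-p\log p$ defect to $(\sqrt{p}-\sqrt{q})^2$ pointwise fails in the wrong direction without additional structure.

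The missing idea is combinatorial, not analytic: since $G$ is $d$-regular, Hall's matching theorem (equivalently, Birkhoff--von Neumann for the bistochastic matrix $P$) lets you write $P=\frac{1}{d}\sum_{i=1}^d P_i$ with each $P_i$ a permutation matrix. Then $\bar\nu:=P\nu$ is a convex combination of $\nu_i:=P_i\nu$, and crucially $H(\nu_i)=H(\nu)$ for every $i$ because permutations preserve entropy. Thus $H(P\nu)-H(\nu)$ is a \emph{pure concavity defect} of $-t\log t$, and a second-order Taylor bound on $f(t)=t\log t$ gives, pointwise in $x$,
\[
\sum_i\lambda_i\bigl(f(\nu_i(x))-f(\bar\nu(x))\bigr)\ \ge\ \tfrac12\sum_i\lambda_i\bigl(\nu_i(x)^{1/2}-\bar\nu(x)^{1/2}\bigr)^2,
\]
hence $H(P\nu)-H(\nu)\ge\tfrac12\sum_i\lambda_i\|\nu_i^{1/2}-\bar\nu^{1/2}\|_2^2$. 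Because $\nu_i^{1/2}=P_i\nu^{1/2}$, the right side equals $1-\ip{P\nu^{1/2},(P\nu)^{1/2}}\ge 1-\|P\nu^{1/2}\|_2$, and from here your spectral-gap and $\ell_1$-to-$\ell_2$ steps go through to give the factor $\tfrac{1-\rho}{16}$. Note that the intermediate quantity is $1-\|Pf\|_2$ rather than the Dirichlet form $1-\ip{Pf,f}$ you had in mind; they are related but the former is what drops out of this argument.
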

\begin{proof}
By Hall's Matching Theorem, there are permutation matrices $P_i$ such that 
$P=\sum_{i=1}^d\lambda_i P_i$, where $\lambda_i=1/d$ (just to see 
that the proof works in more general setting). 
Thus $\bar{\nu}:=P\nu$ is the average of $\nu_i:=P_i\nu$.
Note that $H(\nu_i)=H(\nu)$ since $\nu_i$ has the same distribution up to a permutation as $\nu$. 
Put $f(t)=t\log t$ and observe that $f''(t)=1/t \geq 1/(a+b)$ for $t\in[a,b]$. 
For any $a_i\geq0$ and $\bar{a}=\sum_i \lambda_i a_i$, 
Taylor's theorem yields
\begin{equation}
\sum_i \lambda_i(f(a_i)-f(\bar{a}))
 \geq\frac{1}{2}\sum_i\lambda_i\frac{(a_i-\bar{a})^2}{a_i+\bar{a}}
 \geq\frac{1}{2}\sum_i\lambda_i(a_i^{1/2}-\bar{a}^{1/2})^2.
\end{equation}
It follows that 
\begin{equation}\label{eq:entconc}
H(\bar{\nu})-H(\nu)\geq  \frac{1}{2}\sum_i\lambda_i\|\nu_i^{1/2}-\bar{\nu}^{1/2}\|_2^2.
\end{equation}
Since $\nu_i^{1/2}=(P_i\nu)^{1/2}=P_i\nu^{1/2}$ and $\bar{\nu}^{1/2}$ are 
unit vectors in $\ell_2G$, 
\begin{equation}
\frac{1}{2}\sum_i\lambda_i\|\nu_i^{1/2}-\bar{\nu}^{1/2}\|_2^2
=1-\ip{P\nu^{1/2},\bar{\nu}^{1/2}}
\geq 1-\|P\nu^{1/2}\|_2.
\end{equation}
By Pythagorean theorem, 
for $\xi_0:=\nu^{1/2}-\ip{\nu^{1/2},\pi^{1/2}}\pi^{1/2}\in(\IC\pi^{1/2})^\perp\cap\ell_2G$,
\begin{equation}
1-\|P\nu^{1/2}\|_2 \geq \frac{1}{2}(\|\nu^{1/2}\|_2^2 - \|P\nu^{1/2}\|_2^2)
 \geq \frac{1}{2}(1 - \rho^2)\|\xi_0\|_2^2
\end{equation}
and 
\begin{equation}
\|\xi_0\|_2^2 = 1-\ip{\nu^{1/2},\pi^{1/2}}^2 
 \geq 1-\ip{\nu^{1/2},\pi^{1/2}} 
 =\frac{1}{2}\|\nu^{1/2}-\pi^{1/2}\|_2^2.
\end{equation}
Finally, one has 
\begin{equation}\label{eq:ell2ell1}
\|\nu^{1/2}-\pi^{1/2}\|_2
 \geq \frac{1}{2}\|\nu^{1/2}+\pi^{1/2}\|_2\|\nu^{1/2}-\pi^{1/2}\|_2
 \geq \frac{1}{2}\|\nu-\pi\|_1.
\end{equation}
By putting the above inequalities (\ref{eq:entconc})-(\ref{eq:ell2ell1}) together, we obtain the claim.
\end{proof}

Here is an entropic characterization of 
cutoff phenomena for expander graphs. 
It is widely believed that every expander family of transitive graphs should exhibit cutoff. 

\begin{thm}\label{cond}
Let $\cG=\{G\}$ be an expander family of finite simple connected 
$d$-regular non-bipartite graphs.
Then $\cG$ exhibits cutoff if and only if the following holds: 
For any $\ve>0$ and any $\delta>0$, there is $N\in\IN$ satisfying that 
\begin{equation}
H(T^{\mix}_G(1-\ve))>(1-\delta)\log|G|
\end{equation}
for every $G\in\cG$ with $|G|>N$.
\end{thm}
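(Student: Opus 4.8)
**The plan is to prove both implications by relating the entropy at the mixing time to the total variation distance, using Lemma~\ref{entinc} as the key engine.**

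For the "only if" direction, suppose $\cG$ exhibits cutoff. Fix $\ve>0$ and $\delta>0$. The idea is that at a slightly later time the walk is essentially mixed, so its entropy is close to $\log|G|$; then one pulls back to time $T^{\mix}_G(1-\ve)$ using monotonicity and concavity of $H$. More precisely, by cutoff there is a constant $c>1$ (close to $1$) so that $T^{\mix}_G(\ve) \le c\, T^{\mix}_G(1-\ve)$ for all large $G$, hence $\|\mu^{s}-\pi\|_1 < 2\ve$ at time $s = T^{\mix}_G(\ve)$. By the standard fact that small total variation distance forces entropy close to $\log|G|$ (Pinsker/continuity of entropy, or directly: $\log|G| - H(\mu^s) = D(\mu^s\|\pi) + \text{small}$, and $D$ is controlled when $\|\mu^s-\pi\|_1$ is small together with $\mu^s$ supported near all of $G$; more carefully one uses that $H(\mu^s)\ge (1-2\ve)\log|G| - 1$ type bound), we get $H(s)$ close to $\log|G|$. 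Since $H$ is increasing, $H(T^{\mix}_G(1-\ve)) $ could a priori be much smaller — so here I instead use that $H(t)$ grows by at most $\log(d-1)$ per step (since $\mu^{t+1}$ is an average of $d$ translates of $\mu^t$, $H(t+1)\le H(t)+\log d$), hence $H(s) - H(T^{\mix}_G(1-\ve)) \le (s - T^{\mix}_G(1-\ve))\log d \le (c-1)T^{\mix}_G(1-\ve)\log d = O((c-1)\log|G|)$ by \eqref{eq:expasym}. Choosing $c$ close enough to $1$ makes this $o(\log|G|)$, and combined with $H(s)\ge(1-o(1))\log|G|$ we conclude $H(T^{\mix}_G(1-\ve))>(1-\delta)\log|G|$.

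For the "if" direction, assume the entropy condition. The point is to convert it into a total variation upper bound via Lemma~\ref{entinc}: if $H(t)$ is already close to $\log|G|$, then summing the lemma's inequality over a few more steps forces $\|\mu^t-\pi\|_1$ to become small. Indeed, for $t\ge T^{\mix}_G(1-\ve)$ write $t_0 = T^{\mix}_G(1-\ve)$; telescoping Lemma~\ref{entinc},
\[
\log|G| - H(t_0) = \sum_{s\ge t_0}\bigl(H(s+1)-H(s)\bigr) \ge \frac{1-\rho}{16}\sum_{s\ge t_0}\|\mu^s-\pi\|_1^2.
\]
Since $\cG$ is an expander, $1-\rho$ is bounded below by a constant $\kappa>0$. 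By hypothesis $\log|G|-H(t_0) < \delta\log|G|$, so $\sum_{s\ge t_0}\|\mu^s-\pi\|_1^2 < \frac{16\delta}{\kappa}\log|G|$. Because $\|\mu^s-\pi\|_1$ is nonincreasing in $s$ (it is well known that total variation distance to the stationary distribution is monotone), if $\|\mu^{t_0+m}-\pi\|_1 \ge \alpha$ then every one of the first $m$ terms is $\ge\alpha$, giving $m\alpha^2 < \frac{16\delta}{\kappa}\log|G|$, i.e. $T^{\mix}_G(\alpha) \le t_0 + \frac{16\delta}{\kappa\alpha^2}\log|G| = T^{\mix}_G(1-\ve) + \frac{16\delta}{\kappa\alpha^2}\log|G|$. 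Using $T^{\mix}_G(1-\ve)\asymp\log|G|$ and letting $\delta\to0$, this yields $T^{\mix}_G(\alpha)/T^{\mix}_G(1-\ve)\to1$, which (together with the trivial reverse inequality and a short argument comparing $T^{\mix}_G(\alpha)$ and $T^{\mix}_G(\alpha')$ through $T^{\mix}_G(1-\ve)$) gives cutoff.

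**The main obstacle** is the quantitative step in the "only if" direction: one needs a clean lower bound for $H(\mu^s)$ in terms of $\|\mu^s-\pi\|_1$ being small, and the naive continuity estimate is not quite enough because the entropy could hide mass on a sublinear-dimensional set. The cleanest fix is to avoid it: rather than passing through an intermediate time $s$, apply Lemma~\ref{entinc} directly at $t_0 = T^{\mix}_G(1-\ve)$ in the contrapositive — if $H(t_0)\le(1-\delta)\log|G|$, the telescoping argument above shows the walk mixes in $t_0 + O_\delta(\log|G|)$ steps, but cutoff says it mixes in $t_0(1+o(1))$ steps for the threshold $\ve$, and a careful bookkeeping of the two estimates against the entropic lower bound \eqref{eq:elb}, which pins $t_0 \ge (1-o(1))\log|G|/h_d$, produces a contradiction when $\delta$ is small. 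Making this bookkeeping precise — reconciling the additive $O_\delta(\log|G|)$ slack with the multiplicative cutoff window — is the delicate part, and it is exactly where the expander hypothesis ($\kappa = 1-\rho$ bounded below) and the entropic lower bound both get used.
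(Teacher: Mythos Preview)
Your ``if'' direction is correct and is exactly the paper's argument: Lemma~\ref{entinc} gives $H(t+1)-H(t)\ge\frac{1-\rho}{16}\|\mu^t-\pi\|_1^2$, and summing between $T^{\mix}_G(1-\ve)$ and $T^{\mix}_G(\ve)$ (or $T^{\mix}_G(\alpha)$) bounds the window by $O_{\ve,\alpha}(\delta\log|G|)$, which is $o(\log|G|)$ as $\delta\to0$.

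For the ``only if'' direction your skeleton is also the paper's: pass to a time $s=T^{\mix}_G(\ve')$ with $\ve'$ small, show $H(s)$ is close to $\log|G|$, and then use $H(t+1)-H(t)\le\log d$ together with cutoff ($s-t_0=o(\log|G|)$) to pull the estimate back to $t_0=T^{\mix}_G(1-\ve)$. The genuine gap is exactly the step you flag as ``the main obstacle'': a lower bound $H(\nu)\ge(1-O(\ve'))\log|G|-O_{\ve'}(1)$ when $\|\nu-\pi\|_{\tv}<\ve'$. You are right that Pinsker goes the wrong way and naive continuity fails, but the fix is elementary and there is no need to avoid it. Set $A:=\{x:\nu(x)>\frac{2}{\ve'|G|}\}$; then $|A|<\frac{\ve'}{2}|G|$ since $\nu(A)\le1$, hence $\pi(A)<\ve'/2$ and $\nu(A)<\pi(A)+\ve'/2<\ve'$. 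On $G\setminus A$ one has $-\log\nu(x)\ge\log|G|+\log(\ve'/2)$, so
\[
H(\nu)\ge-\sum_{x\notin A}\nu(x)\log\nu(x)\ge(1-\ve')\bigl(\log|G|+\log(\ve'/2)\bigr),
\]
which is the bound you need. This is precisely what the paper does.

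Your proposed workaround via the contrapositive does \emph{not} work: assuming $H(t_0)\le(1-\delta)\log|G|$ gives $\log|G|-H(t_0)\ge\delta\log|G|$, which makes the telescoped upper bound $T^{\mix}_G(\alpha)\le t_0+\frac{16}{\kappa\alpha^2}(\log|G|-H(t_0))$ \emph{weaker}, not stronger; the resulting $t_0+O(\log|G|)$ carries no small factor of $\delta$ and is fully consistent with cutoff and with the entropic lower bound, so no contradiction arises. The inequality simply points the wrong way for a contrapositive argument.
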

\begin{proof}
We prove the `if' part. 
Since $H(t+1)-H(t)\geq(1-\rho)\ve^2/16$ for all $t<T_G^{\mix}(\ve)$ 
by Lemma~\ref{entinc},  $H(T^{\mix}_G(1-\ve))>(1-\delta)\log|G|$ implies 
\begin{equation}
T_G^{\mix}(\ve)-T_G^{\mix}(1-\ve)
 <\delta \frac{16}{\ve^2} \frac{\log|G|}{1-\rho}.
\end{equation}
This proves the `if' part.
For the `only if' part, we observe that if $\|\nu-\pi\|_{\tv}<\delta/2$, 
then $A:=\{ x\in G : \nu(x)>\frac{2}{\delta|G|}\}$ 
satisfies $\nu(A) < \pi(A)+\delta/2 < \delta$, which implies 
\begin{equation}
H(\nu)\geq-\sum_{x\in G\setminus A}\nu(x)\log\nu(x)
 \geq (1-\delta)(\log|G| + \log(\delta/2)).
\end{equation}
This means that $H(T^{\mix}_G(\delta/2))\geq(1-\delta)\log|G| - O_\delta(1)$. 
In case the graphs $G\in\cG$ are transitive (but not necessarily expanders), 
one has by concavity that 
$H(t_2)-H(t_1)\le H(t_1)\frac{t_2-t_1}{t_1}$ for every $t_1<t_2$ 
and hence cutoff requires $H(T^{\mix}_G(1-\ve))>(1-\delta)\log|G|$ 
for $G$ large enough. 
In general, since $H(t+1)-H(t)\in[0,\log d]$ always holds, 
\begin{equation}
1-\delta-\frac{H(T^{\mix}_G(1-\ve))}{\log|G|} -\frac{O_\delta(1)}{\log |G|}
 \le \frac{T^{\mix}_G(\delta/2)-T^{\mix}_G(1-\ve)}{\log|G|}\log d\to 0,
\end{equation}
by (\ref{eq:expasym}), provided that $\cG$ exhibits cutoff. 
This proves the `only if' part.
\end{proof}

\begin{remark}\label{rem:referee}
The referee has pointed out that a similar result to Theorem~\ref{cond} 
has been used implicitly in the literature, e.g., in \cite{blps}. 
Here is a sketch of a more direct proof along this line, which is communicated by the referee. 
Let $\delta>0$ be given and 
$\nu\in\Prob(G)$ be such that $H(\nu)>(1-\delta)\log|G|$. 
Then for $A:=\{ x \in G : -\log\nu(x) \geq (1-\delta^{1/2})\log|G|  \}$ 
and $m:=\nu(A)$, one has 
\begin{equation}
H(\nu) < (1-m)(1-\delta^{1/2})\log|G| + m\log\frac{|G|}{m}, 
\end{equation}
which implies $1-m< \delta^{1/2}+O_\delta((\log|G|)^{-1})$. 
Thus one has $\|\nu-\pi - (\nu|_A-m\pi)\|_1\le2(1-m)$, 
$\|(\nu|_A-m\pi)\|_2\le2|G|^{-(1-\delta^{1/2})/2}$, 
and 
\begin{equation}
\|P^k\nu-\pi\|_{1}\le 2\rho_G^k|G|^{\delta^{1/2}/2}+2\delta^{1/2}+O_\delta((\log|G|)^{-1}) 
\le 4\delta^{1/2}
\end{equation}
at $k=\frac{\delta^{1/2}}{-2\log\rho_G}\log|G| + O_\delta(1)$ when $\log|G|$ is large enough.
\end{remark}

It is well-known to experts that a cutoff phenomenon is generally related to 
a measure concentration phenomenon for $- \log \mu^t(X^t)$. 

\begin{cor}\label{qsmb}
Let $\cG=\{G\}$ be an expander family of finite simple connected 
$d$-regular non-bipartite graphs. 
Assume that $\cG$ has the following quantitative 
Shannon--McMillan--Breiman type property: 
For any $\delta,\kappa>0$ there is $T_0\in\IN$ such that every $G\in\cG$ 
and $t\geq T_0$ satisfy 
\begin{equation}
\mu_G^t(\cN_{\delta\log|G|}(\{ x\in G : -\log\mu_G^t(x) < H(t)+\delta t\}))> 1-\kappa,
\end{equation}
where $\cN_r(A)$ denotes the $r$-neighborhood of $A\subset G$.
Then $\cG$ exhibits cutoff.
\end{cor}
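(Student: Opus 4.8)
The plan is to derive cutoff from the entropic criterion of Theorem~\ref{cond}: I will show that the quantitative Shannon--McMillan--Breiman hypothesis forces $H(T^{\mix}_G(1-\ve))>(1-\delta)\log|G|$ for all $G\in\cG$ with $|G|$ large, for arbitrary $\ve,\delta\in(0,1)$. Fix $\ve$ and $\delta$ and abbreviate $t:=T^{\mix}_G(1-\ve)$. First I would record two a priori estimates on $t$ that hold for all large $G$: the expander asymptotics (\ref{eq:expasym}) give $t\to\infty$ as $|G|\to\infty$, while the spectral upper bound (\ref{eq:sub}), together with $\rho_G\le\sup_{G\in\cG}\rho_G<1$, gives $t\le C\log|G|$ for a constant $C=C(\ve,\cG)$ depending only on $\ve$ and the expander constant.

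Next I would argue by contradiction, assuming $H(t)\le(1-\delta)\log|G|$. Choose $\delta'>0$ so small that $\delta'\bigl(C+\log(d-1)\bigr)\le\delta/2$, put $\kappa:=\ve/2$, and let $T_0=T_0(\delta',\kappa)$ be the threshold provided by the hypothesis. For $|G|$ large enough that $t\ge T_0$, apply the hypothesis at time $t$: with $B:=\{x\in G:-\log\mu_G^t(x)<H(t)+\delta't\}$ and $A:=\cN_{\delta'\log|G|}(B)$ it yields $\mu_G^t(A)>1-\kappa$. Since $\mu_G^t(x)>e^{-(H(t)+\delta't)}$ for every $x\in B$, one has $|B|<e^{H(t)+\delta't}$; and since a ball of radius $r$ in a $d$-regular graph has at most $d(d-1)^r$ vertices, $|A|\le d(d-1)^{\delta'\log|G|}|B|<d\,|G|^{\delta'\log(d-1)}e^{H(t)+\delta't}$. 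Plugging in $H(t)\le(1-\delta)\log|G|$ and $t\le C\log|G|$ collapses this to $|A|<d\,|G|^{\,1-\delta+\delta'(C+\log(d-1))}\le d\,|G|^{1-\delta/2}$, so that $\pi(A)=|A|/|G|<d\,|G|^{-\delta/2}\to0$.

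Consequently, for all sufficiently large $G$ we have $\pi(A)<\ve/2$, hence $\|\mu_G^t-\pi\|_{\tv}\ge\mu_G^t(A)-\pi(A)>(1-\ve/2)-\ve/2=1-\ve$, contradicting $\|\mu_G^t-\pi\|_{\tv}<1-\ve$, which holds by the very definition of $t=T^{\mix}_G(1-\ve)$. Therefore $H(T^{\mix}_G(1-\ve))>(1-\delta)\log|G|$ for all large $G$, and Theorem~\ref{cond} shows that $\cG$ exhibits cutoff.

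The one delicate point is the volume estimate in the second paragraph: the neighborhood radius $\delta'\log|G|$ must be taken small compared with the mixing-time scale $C\log|G|$ and with $\log(d-1)$, so that the blow-up factor $(d-1)^{\delta'\log|G|}=|G|^{\delta'\log(d-1)}$ of balls is dominated by the entropy gain $|G|^{-\delta}$; selecting $\delta'$ only after $C$ (equivalently, after the expander constant) and $d$ handles this. Everything else is routine bookkeeping with the ``$|G|$ large'' thresholds, which is harmless because $t=T^{\mix}_G(1-\ve)$ is squeezed between the fixed constant $T_0$ and $C\log|G|$.
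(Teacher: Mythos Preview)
Your argument is correct and follows essentially the same route as the paper's proof: bound the cardinality of the ``typical'' set $B$ by $e^{H(t)+\delta' t}$, inflate by the ball-growth factor for the $\delta'\log|G|$-neighborhood, and compare $\mu^t(A)-\pi(A)$ with $\|\mu^t-\pi\|_{\tv}<1-\ve$ at $t=T^{\mix}(1-\ve)$ to force $H(t)$ close to $\log|G|$ via Theorem~\ref{cond}. The only differences are cosmetic: you phrase it as a contradiction and explicitly separate the Theorem~\ref{cond} parameter $\delta$ from the hypothesis parameter $\delta'$ (choosing $\delta'$ after the spectral constant $C$), whereas the paper argues directly, uses the cruder ball bound $d^{\delta\log|G|}$, and leaves the final parameter adjustment implicit in the sentence ``Since $\beta$ is bounded by (\ref{eq:sub}), this yields the cutoff phenomenon by Theorem~\ref{cond}.''
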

\begin{proof}
Put $t:=T^{\mix}(1-\ve)$, $\kappa:=\ve/2$, 
and $A:=\{ x : -\log\mu^t(x) < H(t) + \delta t\}$. 
Then $|A|\le e^{H(t)+\delta t}$ and so $B:=\cN_{\delta\log|G|}(A)$ 
has cardinality $|B|\le d^{\delta\log|G|}e^{H(t)+\delta t}$.
For $t\geq T_0$, one has 
\begin{equation}
1-\ve>\|\mu^t-\pi\|_{\tv}
 \geq\mu^t(B)-\pi(B)>1-\ve/2-|G|^{-1+\delta\log d}e^{H(t)+\delta t}.
\end{equation}
This implies $e^{H(t)+\delta t}> |G|^{1-\delta\log d} \ve / 2$. 
Hence for $\beta:=t/\log |G|$ one has 
\begin{equation}
H(T_{\mix}(1-\ve))> (1-(\beta+\log d)\delta)\log |G| -O_\ve(1).
\end{equation} 
Since $\beta$ is bounded by (\ref{eq:sub}), 
this yields the cutoff phenomenon by Theorem~\ref{cond}.
\end{proof}
\section{Proof of Theorem~\ref{ramanujancutoffthm}}
In this section, we prove Theorem~\ref{ramanujancutoffthm}. 
In light of Theorem~\ref{cond} and the entropic lower bound (\ref{eq:elb}), 
it suffices to show $H(t)-H(t-1)\geq (1-\delta)h_d$ holds for $t\le T^{\mix}(1-\ve)$.
Upon change of $\alpha\in(0,1)$, we may replace $T^{\mix}(\alpha)$ by 
the \emph{Hellinger distance mixing time} 
\begin{equation}
T^{\mix,2}(\alpha):=\min\{t\in\IN : \|(\mu^t)^{1/2}-\pi^{1/2}\|_2^2<2\alpha\}.
\end{equation}
We state this well-known fact as a lemma for the readers' convenience. 
\begin{lem}
For every $\ve>0$, there is $\delta>0$ such that every $\nu\in\Prob(G)$ 
satisfies the following. 
\begin{itemize}
\item If $\|\nu-\pi\|_1<\delta$, then $\|\nu^{1/2}-\pi^{1/2}\|_2^2<\ve$.
\item If $\|\nu^{1/2}-\pi^{1/2}\|_2^2<\delta$, then $\|\nu-\pi\|_1<\ve$.
\item If $\|\nu-\pi\|_1<2-\ve$, then $\|\nu^{1/2}-\pi^{1/2}\|_2^2<2-\delta$.
\item If $\|\nu^{1/2}-\pi^{1/2}\|_2^2<2-\ve$, then $\|\nu-\pi\|_1<2-\delta$.
\end{itemize}
\end{lem}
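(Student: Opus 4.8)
The plan is to deduce all four implications from the classical two-sided comparison between total variation and Hellinger distance,
\[
\|\nu^{1/2}-\pi^{1/2}\|_2^2 \;\le\; \|\nu-\pi\|_1 \;\le\; \|\nu^{1/2}-\pi^{1/2}\|_2\,\|\nu^{1/2}+\pi^{1/2}\|_2,
\]
valid for any two probability measures on $G$, combined with the identity $\|\nu^{1/2}+\pi^{1/2}\|_2^2 = 2+2\ip{\nu^{1/2},\pi^{1/2}} = 4-\|\nu^{1/2}-\pi^{1/2}\|_2^2$.

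First I would prove the two displayed inequalities. From the pointwise identity $|\nu(x)-\pi(x)| = \bigl|\nu(x)^{1/2}-\pi(x)^{1/2}\bigr|\bigl(\nu(x)^{1/2}+\pi(x)^{1/2}\bigr)$, legitimate since the entries are nonnegative, the left inequality follows by summing over $x$ and using $\nu(x)^{1/2}+\pi(x)^{1/2}\ge\bigl|\nu(x)^{1/2}-\pi(x)^{1/2}\bigr|$, while the right inequality follows by summing and applying Cauchy--Schwarz, as in~(\ref{eq:ell2ell1}). Abbreviating $b:=\|\nu^{1/2}-\pi^{1/2}\|_2^2$, which lies in $[0,2]$ because $\ip{\nu^{1/2},\pi^{1/2}}\ge0$, the right inequality and the identity together read $\|\nu-\pi\|_1\le\sqrt{b(4-b)}$, and in particular $\|\nu-\pi\|_1\le 2b^{1/2}$.

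Granting these, the four bullets are routine. The left inequality $b\le\|\nu-\pi\|_1$ handles the first bullet with $\delta=\ve$ and the third bullet with $\delta=\ve$. The estimate $\|\nu-\pi\|_1\le 2b^{1/2}$ handles the second bullet with $\delta=\ve^2/4$. For the fourth bullet I would use that $t\mapsto\sqrt{t(4-t)}$ is increasing on $[0,2]$: if $b<2-\ve$ then $\|\nu-\pi\|_1\le\sqrt{b(4-b)}<\sqrt{(2-\ve)(2+\ve)}=\sqrt{4-\ve^2}<2$, so $\delta:=2-\sqrt{4-\ve^2}>0$ does the job. Taking $\delta$ to be the minimum of the four values above completes the proof.

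The only step with any real content is the fourth bullet---that total variation distance bounded away from its maximal value $2$ forces the squared Hellinger distance to stay away from $2$. Here the crude bound $\|\nu-\pi\|_1\le 2b^{1/2}$ is useless (it stays below $2$ only when $\ve>1$), and one genuinely needs the sharp form $\|\nu-\pi\|_1\le\sqrt{b(4-b)}$, that is, Cauchy--Schwarz together with the identity $\|\nu^{1/2}+\pi^{1/2}\|_2^2=4-b$; the right-hand side attains the value $2$ only at $b=2$, so keeping $b$ below $2-\ve$ keeps it strictly below $2$. Everything else is bookkeeping.
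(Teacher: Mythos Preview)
Your argument is correct. For the first three bullets it coincides with the paper's: both invoke the two-sided comparison $\|\nu^{1/2}-\pi^{1/2}\|_2^2\le\|\nu-\pi\|_1\le\|\nu^{1/2}-\pi^{1/2}\|_2\|\nu^{1/2}+\pi^{1/2}\|_2$ and read off the conclusions directly.

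The fourth bullet is where you diverge from the paper. The paper argues via a level set: setting $A=\{x:\nu(x)>\ve^2/(16|G|)\}$, the hypothesis $\ip{\nu^{1/2},\pi^{1/2}}>\ve/2$ is split over $A$ and $A^c$ to force $|A|\ge(\ve^2/16)|G|$, whence $\|\nu-\pi\|_1<2-\ve^4/128$. You instead stay with the Cauchy--Schwarz bound in its sharp form $\|\nu-\pi\|_1\le\sqrt{b(4-b)}$, using the identity $\|\nu^{1/2}+\pi^{1/2}\|_2^2=4-b$, and exploit monotonicity of $t\mapsto\sqrt{t(4-t)}$ on $[0,2]$ to get $\delta=2-\sqrt{4-\ve^2}$. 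This is more direct and yields a substantially better constant (of order $\ve^2$ rather than $\ve^4$). The paper's level-set argument is a bit more hands-on probabilistically, but for this lemma your purely analytic route is the cleaner one.
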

\begin{proof}
The first three are easy consequences of 
\begin{equation}
\|\nu^{1/2}-\pi^{1/2}\|_2^2\le \|\nu-\pi\|_1\le\|\nu^{1/2}-\pi^{1/2}\|_2\|\nu^{1/2}+\pi^{1/2}\|_2.
\end{equation} 
For the last, put $A:=\{ x : \nu(x)>\frac{\ve^2}{16|G|}\}$.
Then $\|\nu^{1/2}-\pi^{1/2}\|_2^2<2-\ve$ implies
\begin{equation}
\ve/2< \ip{\nu^{1/2},\pi^{1/2}}\le(|A|/|G|)^{1/2}+\ve/4
\end{equation}
and hence $|A|\geq(\ve^2/16)|G|$. It follows that $\|\nu-\pi\|_1<2-\ve^4/128$.
\end{proof}
Here is another auxiliary lemma about concavity of the square root. 
\begin{lem}\label{ces}
Let $(\Omega,\cA,\IP)$ be a standard probability space and $\cB\subset\cA$ be a $\sigma$-subalgebra.
For any bounded random variable $f\geq 0$, the conditional expectation $g=\IE{f \,|\, \cB}$ satisfies 
\begin{equation}
\| g^{1/2} - f^{1/2} \|_{L^2(\Omega,\IP)}^2 \le 2\|f\|_{L^\infty(\Omega,\IP)}^{1/2}(\IE{g^{1/2}}-\IE{f^{1/2}}).
\end{equation}
\end{lem}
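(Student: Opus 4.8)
The strategy is to reduce the claimed inequality to a pointwise statement about the function $f(t)=t\log t$-type estimate on the square root, exploiting that conditional expectation is an $L^2$-contraction and that $g\le\|f\|_\infty$ almost surely. Write $c=\|f\|_{L^\infty}$. First I would record the elementary pointwise bound: for $0\le a\le c$ and $0\le b\le c$ one has
\begin{equation}
(b^{1/2}-a^{1/2})^2=\frac{(b-a)^2}{(a^{1/2}+b^{1/2})^2}\le\frac{(b-a)^2}{b}\quad\text{when }b>0,
\end{equation}
but a cleaner route is to use instead that $s\mapsto s^{1/2}$ has second derivative $-\tfrac14 s^{-3/2}$, so for the pair $(f,g)$ with $g=\IE{f\mid\cB}$, a Taylor expansion of $s^{1/2}$ around $g$ gives, pointwise,
\begin{equation}
f^{1/2}-g^{1/2}\le\frac{1}{2g^{1/2}}(f-g),
\end{equation}
i.e. $g^{1/2}-f^{1/2}\ge-\tfrac{1}{2g^{1/2}}(f-g)$. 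Taking conditional expectation $\IE{\,\cdot\mid\cB}$ kills the linear term (since $\IE{f-g\mid\cB}=0$ and $g^{1/2}$ is $\cB$-measurable), so $\IE{g^{1/2}-f^{1/2}\mid\cB}\ge0$, which is just concavity; this alone is not quantitative enough, so I need the second-order remainder.

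**Key steps.** The genuine input is the quadratic remainder in Taylor's theorem: for $s,t\ge0$,
\begin{equation}
t^{1/2}=s^{1/2}+\frac{1}{2s^{1/2}}(t-s)-\frac{1}{8\xi^{3/2}}(t-s)^2
\end{equation}
for some $\xi$ between $s$ and $t$; applied with $s=g(\omega)$, $t=f(\omega)$ and using $\xi\le c$, this yields the pointwise lower bound
\begin{equation}
f^{1/2}(\omega)\le g^{1/2}(\omega)+\frac{1}{2g^{1/2}(\omega)}(f(\omega)-g(\omega))-\frac{1}{8c^{3/2}}(f(\omega)-g(\omega))^2.
\end{equation}
Rearranging and taking $\IE{\,\cdot\mid\cB}$ to annihilate the linear term gives
\begin{equation}
\IE{g^{1/2}-f^{1/2}\mid\cB}\ge\frac{1}{8c^{3/2}}\IE{(f-g)^2\mid\cB}=\frac{1}{8c^{3/2}}\bigl(\IE{f^2\mid\cB}-g^2\bigr).
\end{equation}
Now I would compare $\IE{(f-g)^2\mid\cB}$ with $\IE{(f^{1/2}-g^{1/2})^2\mid\cB}$: since $f,g\in[0,c]$, we have $(f-g)^2=(f^{1/2}-g^{1/2})^2(f^{1/2}+g^{1/2})^2$, and $f^{1/2}+g^{1/2}\le 2c^{1/2}$, hence $\IE{(f-g)^2\mid\cB}\le 4c\,\IE{(f^{1/2}-g^{1/2})^2\mid\cB}$ — wait, that inequality goes the wrong way. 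Instead I want a lower bound on $\IE{(f-g)^2\mid\cB}$ in terms of $\|g^{1/2}-f^{1/2}\|_2^2$, so I should use $f^{1/2}+g^{1/2}\ge g^{1/2}$ is not bounded below either. The correct comparison: $\|g^{1/2}-f^{1/2}\|_2^2\le 4c^{-3/2}\cdot c\cdot(\IE{g^{1/2}}-\IE{f^{1/2}})$ requires relating the $L^2$-defect of the square root to the scalar defect $\IE{g^{1/2}}-\IE{f^{1/2}}$; that scalar equals $\IE{\IE{g^{1/2}-f^{1/2}\mid\cB}}$, which by the displayed bound is $\ge\tfrac{1}{8c^{3/2}}\IE{(f-g)^2}$. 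Meanwhile $\|g^{1/2}-f^{1/2}\|_2^2=\IE{(f^{1/2}-g^{1/2})^2}\le \IE{(f-g)^2}/\inf(f^{1/2}+g^{1/2})^2$ fails when the infimum is $0$, so instead bound $(f^{1/2}-g^{1/2})^2\le|f-g|\le$ (needs care). The clean fix: $(f^{1/2}-g^{1/2})^2\le |f^{1/2}-g^{1/2}|\cdot(f^{1/2}+g^{1/2})=|f-g|$, giving $\|g^{1/2}-f^{1/2}\|_2^2\le\IE{|f-g|}\le\|f-g\|_2\le\bigl(\IE{(f-g)^2}\bigr)^{1/2}$; combined with $\IE{(f-g)^2}\le 8c^{3/2}(\IE{g^{1/2}}-\IE{f^{1/2}})$ this gives $\|g^{1/2}-f^{1/2}\|_2^2\le\bigl(8c^{3/2}(\IE{g^{1/2}}-\IE{f^{1/2}})\bigr)^{1/2}$, which has the wrong exponent. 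So the direct comparison must instead keep things quadratic throughout: bound $(f^{1/2}-g^{1/2})^2=(f-g)^2/(f^{1/2}+g^{1/2})^2$ and handle the denominator by restricting to where it is not small, or better, use $\IE{(f-g)^2\mid\cB}\ge c^{-1}\cdot$(something). Actually $(f-g)^2\ge \tfrac{1}{?}$ — here is the right identity: since $g=\IE{f\mid\cB}$, $\IE{(f-g)^2\mid\cB}=\IE{(f^{1/2}-g^{1/2})^2(f^{1/2}+g^{1/2})^2\mid\cB}\ge g\cdot\IE{(f^{1/2}-g^{1/2})^2\mid\cB}$ is still not bounded below by a constant times $g$; but $(f^{1/2}+g^{1/2})^2\ge f^{1/2}\cdot g^{1/2}+g\ge g$, hmm. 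The honest route, and the one I would commit to, is: $(f^{1/2}+g^{1/2})^2\le 2(f+g)$, no — I want a lower bound. Use instead that conditionally $\IE{(f-g)^2\mid\cB}=\IE{f^2\mid\cB}-g^2$ and $\IE{f^2\mid\cB}\le c\,\IE{f\mid\cB}=cg$, so this is $\le cg-g^2\le cg$; that's an upper bound, still wrong direction.

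**The main obstacle.** The one genuinely delicate point is exactly this comparison: I must lower-bound the "conditional variance" $\IE{(f-g)^2\mid\cB}$ by a constant multiple of the $L^2$-square-root defect $\IE{(f^{1/2}-g^{1/2})^2\mid\cB}$, and the naive bound via $(f^{1/2}+g^{1/2})^2\le 4c$ gives $\IE{(f-g)^2\mid\cB}\le 4c\,\IE{(f^{1/2}-g^{1/2})^2\mid\cB}$ — the right direction is needed, so one instead writes the chain the other way: from the Taylor bound, $\IE{g^{1/2}}-\IE{f^{1/2}}\ge\tfrac{1}{8c^{3/2}}\IE{(f-g)^2}$, and then $\IE{(f-g)^2}=\IE{(f^{1/2}-g^{1/2})^2(f^{1/2}+g^{1/2})^2}$, and here I bound the inner square-root sum below by observing $(f^{1/2}+g^{1/2})^2\ge g$ is false but $(f^{1/2}+g^{1/2})^2\ge\tfrac14(f^{1/2}+g^{1/2})^2+\cdots$ — the resolution is to not separate but integrate: Cauchy–Schwarz gives $\IE{(f^{1/2}-g^{1/2})^2}=\IE{\frac{|f-g|}{f^{1/2}+g^{1/2}}\cdot|f^{1/2}-g^{1/2}|}$ and since $|f^{1/2}-g^{1/2}|\le\frac{|f-g|}{f^{1/2}+g^{1/2}}\le\frac{|f-g|}{\,\cdot\,}$... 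Ultimately the cleanest correct argument replaces the factor $\|f\|_\infty^{1/2}$ in the statement by working with $h(s)=s^{1/2}$ and the bound $|a^{1/2}-b^{1/2}|^2\le\frac{|a-b|^2}{\max(a,b)}$ only when useful, and otherwise $|a^{1/2}-b^{1/2}|^2\le|a-b|$: splitting $\Omega$ accordingly. I expect the clean way the author does this is via a single pointwise inequality of the form $(a^{1/2}-b^{1/2})^2\le 2c^{1/2}\bigl(\tfrac{a+b}{2}-(ab)^{1/2}\bigr)$ wait — the natural candidate is: for $0\le a,b\le c$,
\begin{equation}
(a^{1/2}-b^{1/2})^2\le 2c^{1/2}\Bigl(\frac{a^{1/2}+b^{1/2}}{?}\Bigr)
\end{equation}
— more precisely, since the LHS equals $a+b-2(ab)^{1/2}$, one wants this $\le 2c^{1/2}\cdot(\text{linear defect in }s^{1/2})$; taking conditional expectation of $a=f$, $b=g$ won't directly work since $b=g$ is already the average. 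So I think the actual mechanism is: apply the lemma's displayed inequality with $\Omega$ replaced by the two-point averaging structure hidden in $g=\IE{f\mid\cB}$, reducing to the finite Jensen gap $\sum_i\lambda_i(a_i^{1/2})$ vs $\bar a^{1/2}$ exactly as in the proof of Lemma~\ref{entinc}, where $f''(t)=1/t\ge 1/(2c)$ on $[0,c]$ replaces $f''=1/t$ there; that gives $\sum_i\lambda_i(a_i^{1/2}-\bar a^{1/2})^2\le 2c^{1/2}\bigl(\sum_i\lambda_i a_i^{1/2}-\bar a^{1/2}\bigr)$ after also using $(x^{1/2}-y^{1/2})^2\le\ldots$, and integrating over the disintegration of $\IP$ over $\cB$ produces precisely $\|g^{1/2}-f^{1/2}\|_2^2\le 2c^{1/2}(\IE{g^{1/2}}-\IE{f^{1/2}})$ — wait the inequality flips once more because $\IE{f^{1/2}\mid\cB}\le g^{1/2}$. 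So the final sign works out and the constant $2\|f\|_\infty^{1/2}$ emerges from $\max(a^{1/2}+b^{1/2})\le 2c^{1/2}$; the obstacle is just bookkeeping these three sign flips and the disintegration, all of which I would carry out by first proving the pointwise/finite inequality and then integrating.
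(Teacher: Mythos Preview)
Your proposal does not contain a proof. You try several approaches --- Taylor expansion with remainder, Cauchy--Schwarz, a split of $\Omega$, a pointwise inequality for $(a^{1/2}-b^{1/2})^2$ --- and in each case you correctly observe that the resulting inequality goes the wrong way. The final paragraph gestures at ``the constant $2\|f\|_\infty^{1/2}$ emerges from $\max(a^{1/2}+b^{1/2})\le 2c^{1/2}$'' but never writes down an inequality that actually closes; your Taylor bound $\IE{g^{1/2}}-\IE{f^{1/2}}\ge\tfrac{1}{8c^{3/2}}\IE{(f-g)^2}$ cannot be combined with any estimate of the form $\|g^{1/2}-f^{1/2}\|_2^2\le C\,\IE{(f-g)^2}$, because (as you yourself note) $(f-g)^2=(f^{1/2}-g^{1/2})^2(f^{1/2}+g^{1/2})^2\le 4c\,(f^{1/2}-g^{1/2})^2$ only gives the reverse direction.

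The missing idea is that in the case of trivial $\cB$ (so $g=\IE{f}$ is a constant) one has an \emph{exact identity}, not an inequality: expanding the square and using $\IE{f}=g$ gives
\[
\|g^{1/2}-f^{1/2}\|_2^2
= g - 2g^{1/2}\,\IE{f^{1/2}} + \IE{f}
= 2g - 2g^{1/2}\,\IE{f^{1/2}}
= 2g^{1/2}\bigl(\IE{g^{1/2}}-\IE{f^{1/2}}\bigr).
\]
The only estimate then needed is $g^{1/2}=\IE{f}^{1/2}\le\|f\|_\infty^{1/2}$, which immediately yields the lemma in this case. The general $\cB$ follows by disintegration (apply the identity on each atom and integrate). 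All of your machinery --- second-order Taylor remainders, bounding $(f^{1/2}+g^{1/2})$ above or below, splitting $\Omega$ --- is unnecessary, and the reason your attempts kept flipping direction is that you were trying to prove by inequality what is in fact an equality.
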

\begin{proof}
First, assume that $\cB$ is trivial and hence $g=\IE{f}$ is a constant.
Then, 
\begin{equation}
\| g^{1/2} - f^{1/2} \|_2^2
 =2g^{1/2}\int g^{1/2} - f^{1/2}\,d\IP
 \le 2\|f\|_\infty^{1/2}\int g^{1/2} - f^{1/2}\,d\IP
\end{equation}
and the assertion holds true.
The case where $\cB$ is purely atomic follows by integrating 
this estimate over atoms. 
The general case then follows by approximation (or the dis\-integration formula).
\end{proof}

\begin{proof}[Proof of Theorem~\ref{ramanujancutoffthm}]
Recall from the beginning of the section that we want to estimate
\begin{equation}\label{eq:entft}
H(t)-H(t-1)=\IE{-\log f_t},
\end{equation} 
where
\begin{equation}
f_t:=\frac{\mu^t(X^t)}{\mu^{t-1}(X^{t-1})}.
\end{equation}
The random variable $f_t$ satisfies $f_t\geq\frac{1}{d}$ and 
\begin{equation}
\IE{f_t}=\sum_{x,y}\frac{\mu^t(y)}{\mu^{t-1}(x)} \cdot P(y,x)\mu^{t-1}(x)
=\sum_{x,y\in G,\,x\in\supp\mu^{t-1}}\mu^t(y)P(y,x)\le1.
\end{equation}
Note that since $\sum_{x\in\supp\mu^{t-1}}P(y,x)=1$ for $y\in\supp\mu^{t-2}$, 
one has 
\begin{equation}
1- \IE{f_t} \le \sum_{y\notin\supp\mu^{t-2}}\mu^t(y)\le(\frac{d-1}{d})^{t-1}.
\end{equation}
To relate the random variable $f_t$ to the functional analysis on $\ell_2G$, 
we consider the unit vectors $\xi_t:=(\mu^t)^{1/2}$ in $\ell_2G$ and observe that 
\begin{equation}
\ip{ P\xi_{t-1},\xi_t }
 = \sum_{x,y\in G} P(y,x)\mu^{t-1}(x) \Bigl(\frac{\mu_t(y)}{\mu_{t-1}(x)}\Bigr)^{1/2} =\IE{f_t^{1/2}}.
\end{equation}
Hence, for any $t\le T^{\mix,2}(1-\ve)$, one has 
\begin{equation}
\IE{f_t^{1/2}} \le \|P\xi_{t-1}\|\le \ip{\xi_{t-1},\pi^{1/2}}+\rho<\rho+\ve.
\end{equation} 

We will compare the value $\IE{f_t^{1/2}}$ with that for the covering tree.
Let's consider the covering of $G$ by the $d$-regular tree $T_d$ 
and lift the random walk $(X^t)_t$ to the simple random walk $(\tilde{X}^t)_t$ on $T_d$. 
We write 
$\tilde{f}_t:=\tilde{\mu}^t(\tilde{X}^t)/\tilde{\mu}^{t-1}(\tilde{X}^{t-1})$ and 
observe that
\begin{equation}
\IE{\tilde{f}_t \,|\, X^{t-1},X^t } \approx f_t.
\end{equation}
Indeed, suppose $X^{t-1}=x$ and $X^t=y$. 
Then $x$ and $y$ are adjacent and 
$\IP(X^{t-1}=x,X^t=y)=\mu^{t-1}(x)/d$.
There is a bijection $q_{x,y}$ from the lifts $[x]$ 
of $x$ onto the lifts $[y]$ of $y$ such that 
$p$ and $q_{x,y}(p)$ are adjacent for all $p\in[x]$. 
Moreover, $\IP(\tilde{X}^{t-1}=p,\tilde{X}^t=q_{x,y}(p))=\tilde{\mu}^{t-1}(p)/d$.
Thus at the event $X^{t-1}=x$ and $X^t=y$, one has 
\begin{equation}
\IE{\tilde{f}_t \,|\, X^{t-1},X^t}
 =\frac{d}{\mu^{t-1}(x)}\sum_{p\in [x]\cap\supp\tilde{\mu}^{t-1}}\frac{\tilde{\mu}^t(q_{x,y}(p))}{\tilde{\mu}^{t-1}(p)}\frac{\tilde{\mu}^{t-1}(p)}{d}
\le\frac{\mu^t(y)}{\mu^{t-1}(x)}=f_t.
\end{equation}
Note that $\| f_t-\IE{\tilde{f}_t \,|\, X^{t-1},X^t}\|_1\le(\frac{d-1}{d})^{t-1}$ is virtually negligible.
By the concavity of the square root, one has 
\begin{equation}
\IE{f_t^{1/2}}\geq \IE{(\IE{\tilde{f}_t \,|\, X^{t-1},X^t})^{1/2}} \geq \IE{\tilde{f}_t^{1/2}}.
\end{equation}
It is not hard to show that the value of $\tilde{f}_t$ is asymptotically $d-1$ 
with probability $\frac{1}{d}$ (when the step $\tilde{X}^t$ is in the direction to the origin)  
and $\frac{1}{d-1}$ with probability $\frac{d-1}{d}$. 
Since $\tilde{f}_t\geq\frac{1}{d}$ and $\IE{\tilde{f}_t}\le1$ 
are unconditionally true, one has 
\begin{equation}
\lim_t\IE{-\log\tilde{f}_t}=-\frac{1}{d}\log(d-1)+\frac{d-1}{d}\log(d-1)=\frac{(d-2)\log(d-1)}{d}=h_d
\end{equation}
and
\begin{equation}
\lim_t \IE{\tilde{f}_t^{1/2}} = \frac{1}{d}\sqrt{d-1}+\frac{d-1}{d}\sqrt{\frac{1}{d-1}}
=\frac{2\sqrt{d-1}}{d} = \rho_d.
\end{equation}
The first equality merely confirms that the asymptotic entropy of 
the tree random walk $(\tilde{X}^t)_t$ is $h_d$. 
The second equality is the lucky(?) coincidence that we are going to exploit. 

Let $T_\ve\in\IN$ be such that $\IE{\tilde{f}_t^{1/2}}\geq\rho_d-\ve$ 
for all $t\geq T_\ve$.
By summarizing the above discussion, we obtain the following. 
For any $0<\ve<1$, if an asymptotically Ramanujan graph $G$ is large enough, 
then one has 
\begin{equation}\label{eq:sqrtapprox}
\rho_d+\ve \geq \IE{f_t^{1/2}}\geq \IE{\tilde{f}_t^{1/2}} \geq\rho_d-\ve
\end{equation}
for all $t\in [T_\ve, T_G^{\mix,2}(1-\ve)]$.
It is intuitively clear (and will be explained later) that this implies 
\begin{equation}\label{eq:entapprox}
\IE{-\log f_t} \geq (1-\delta(\ve))h_d
\end{equation}
for all $t\in [T_\ve, T_G^{\mix,2}(1-\ve)]$, where
$\delta\geq0$ is a continuous function with $\delta(0)=0$. 

To conclude the proof of Theorem~\ref{ramanujancutoffthm} 
via Theorem~\ref{cond}, 
let $\ve>0$ and $\delta>0$ be given. 
Take $\ve_0\in(0,\ve)$ small enough so that $\delta(\ve_0)<\delta$. 
Then by (\ref{eq:entft}), (\ref{eq:entapprox}), 
and the entropic lower bound (\ref{eq:elb}), one obtains 
\begin{equation}
H(T^{\mix,2}_G(1-\ve))\geq H(T^{\mix,2}_G(1-\ve_0))
\geq (1-\delta)\log|G|-o_{\ve_0}(\log|G|).
\end{equation}
This finishes the proof of Theorem~\ref{ramanujancutoffthm}.

For completeness, we explain why (\ref{eq:sqrtapprox}) implies (\ref{eq:entapprox}).
For this, we may assume that $\|f_t-\IE{\tilde{f}_t \,|\, X^{t-1},X^t } \|_1<\ve/2$ and 
$\| \tilde{f}_t - f\|_1<\ve/2$ for some random variable $f$ 
that takes value $d-1$ with probability $\frac{1}{d}$ and $\frac{1}{d-1}$ with probability $\frac{d-1}{d}$. 
Then $g:=\IE{f \,|\, X^{t-1},X^t}$ satisfies $\|g-f_t\|_1<\ve$ and 
\begin{equation}
\| g^{1/2} - f_t^{1/2} \|_1\le\| | g - f_t|^{1/2}\|_1\le \| g-f_t \|_1^{1/2}<\ve^{1/2}.
\end{equation}
By (\ref{eq:sqrtapprox}), 
this implies that $\IE{g^{1/2}}-\IE{f^{1/2}}\le2\ve+\ve^{1/2}<3\ve^{1/2}$ and, 
by Lemma~\ref{ces}, that 
\begin{equation}
\|g^{1/2}-f^{1/2}\|_2 \le (2\|f\|_\infty^{1/2}(\IE{g^{1/2}}-\IE{f^{1/2}}))^{1/2}\le
3(d-1)^{1/4}\ve^{1/4}.
\end{equation}
Thus, $\| g - f \|_1\le \|g^{1/2}-f^{1/2}\|_2\|g^{1/2}+f^{1/2}\|_2 = O_d(\ve^{1/4})$ 
and $\| f_t-f \|_1= O_d(\ve^{1/4})$. 
Since $f\geq\frac{1}{d-1}$, this implies $\IE{-\log f_t}\geq(1-\delta(\ve))\IE{-\log f}$ 
for some continuous function $\delta\geq0$ with $\delta(0)=0$.
\end{proof}

\begin{remark}
In the above proof, we saw $\IE{f_t^{1/2}}\approx\IE{\tilde{f}_t^{1/2}}$. 
This implies that $(f_t)_t$ are asymptotically independent family and, 
by the central limit theorem, that Ramanujan graphs verify 
the quantitative Shannon--McMillan--Breiman type theorem 
(the statement without $\cN_{\delta\log|G|}$ in Corollary~\ref{qsmb}). 
In case of random walks on transitive graphs, one has 
$\IE{f_t^{1/2}}=\IE{g_t^{1/2}}$, where $g_t:=\frac{P^t(X^t,X^0)}{P^{t-1}(X^t,X^1)}$, 
and $g_{t+1}\approx\IE{g_t\,|\, X^1,X^{t+1}}$.
In view of Kaimanovich and Vershik's proof of the Shannon--McMillan--Breiman 
type theorem (Theorem~2.1 in \cite{kv}), it would be interesting to know 
whether or not $g_t\approx\IE{g_t\,|\,X_1,\ldots,X_m}$ for $m=o(\log|G|)$. 

In general, for a finitely generated (infinite) group $\Gamma$ 
and a finitely-supported non-degenerate symmetric probability 
measure $\mu$ on $\Gamma$, the value 
$\ip{\lambda(\mu)(\mu^{t-1})^{1/2},(\mu^t)^{1/2}}$ 
converges to $\ip{\sigma(\mu)1_\Pi,1_\Pi}_{L^2(\Pi,\nu)}$, where $(\Pi,\nu)$ 
is the Poisson boundary and $\sigma$ is the unitary Koopman representation.
This value can be strictly smaller than the spectral radius $\rho$. For example, 
if $G$ is amenable, then $\rho=1$; 
while $\ip{\lambda(\mu)(\mu^{t-1})^{1/2},(\mu^t)^{1/2}}\to1$ if and only if 
the Poisson boundary is trivial. See \cite{kv}.
\end{remark}

\subsection*{Acknowledgments}
The author would like to thank the anonymous referee for valuable comments 
and Remark~\ref{rem:referee}.
A part of this work was done during the research camp in Tsurui village 
in August 2020. 
The author acknowledges the kind hospitality of the people of Tsurui village. 
The camp was supported by Operator Algebra Supporters' Fund and 
JSPS KAKENHI 17K05277. 


\begin{thebibliography}{BHLP}
%
\bibitem[BHLP]{bhlp}
A. Ben-Hamou, E. Lubetzky, and Y. Peres; 
Comparing mixing times on sparse random graphs. 
\emph{Ann. Inst. Henri Poincar\'e Probab. Stat.} \textbf{55} (2019), 1116--1130.
%
\bibitem[BHS]{bhs}
A. Ben-Hamou and J. Salez; 
Cutoff for nonbacktracking random walks on sparse random graphs. 
\emph{Ann. Probab.} \textbf{45} (2017), 1752--1770.
%
\bibitem[BLPS]{blps}
N. Berestycki, E. Lubetzky, Y. Peres, and A. Sly; 
Random walks on the random graph. 
\emph{Ann. Probab.} \textbf{46} (2018), 456--490. 
%
\bibitem[BCS]{bcs}
C. Bordenave, P. Caputo, and J. Salez; 
Cutoff at the "entropic time'' for sparse Markov chains. 
\emph{Probab. Theory Related Fields} \textbf{173} (2019), 261--292.
%
\bibitem[BL]{bl} C. Bordenave and H. Lacoin;
Cutoff at the entropic time for random walks on covered expander graphs.
\emph{Preprint.} arXiv:1812.06769
%
\bibitem[CK]{ck} Guillaume Conchon--Kerjan; 
Cutoff for random lifts of weighted graphs.
\emph{Preprint.} arXiv:1908.02898
%
\bibitem[De]{derriennic} Y. Derriennic; 
Quelques applications du th\'eor\`eme ergodique sous-additif. 
\emph{Conference on Random Walks (Kleebach, 1979).} 
\emph{Ast\'erisque} \textbf{74} (1980), 183--201.
%
\bibitem[Fr]{friedman} J. Friedman;
A proof of Alon's second eigenvalue conjecture and related problems. 
\emph{Mem. Amer. Math. Soc.} \textbf{195} (2008), no. 910, viii+100 pp. 
%
\bibitem[He]{hermon} J. Hermon; 
Cutoff for Ramanujan graphs via degree inflation. 
\emph{Electron. Commun. Probab.} \textbf{22} (2017), Paper No. 45, 10 pp.
%
\bibitem[KV]{kv}  V. A. Kaimanovich and A. M. Vershik; 
Random walks on discrete groups: boundary and entropy. 
\emph{Ann. Probab.} \textbf{11} (1983), 457--490. 
%
\bibitem[LP]{lp} E. Lubetzky and Y. Peres; 
Cutoff on all Ramanujan graphs. 
\emph{Geom. Funct. Anal.} \textbf{26} (2016), 1190--1216.
%
\bibitem[Lu]{lubotzky} A. Lubotzky; 
\emph{Discrete groups, expanding graphs and invariant measures.} 
Progress in Mathematics, 125. Birkh\"auser Verlag, Basel, 1994. xii+195 pp. 
%
\bibitem[LPS]{lps} A. Lubotzky, R. Phillips, and P. Sarnak;
Ramanujan graphs.
\emph{Combinatorica} \textbf{8} (1988), 261--277.
\end{thebibliography}
\end{document}